\documentclass[review]{elsarticle}
\usepackage[a4paper, left=2.5cm, right=2.5cm, top=2.5cm, bottom=2.5cm]{geometry}
\usepackage{hyperref}
\usepackage{amssymb}
\usepackage{amsmath}
\usepackage{amsthm}
\usepackage{graphicx}
\usepackage{dcolumn}
\usepackage{endnotes}
\usepackage{tabularx}
\usepackage{mathtools}
\usepackage{amsmath,amssymb,enumitem,amsfonts}
\usepackage{geometry}
\usepackage[matrix,arrow]{xy}

\journal{......}

\newtheorem{theorem}{Theorem}[section]
\newtheorem{proposition}[theorem]{Proposition}
\newtheorem{lemma}[theorem]{Lemma}
\newtheorem{corollary}[theorem]{Corollary}

\theoremstyle{definition}
\newtheorem{definition}[theorem]{Definition}
\newtheorem{example}[theorem]{Example}
\newtheorem{remark}[theorem]{Remark}
\newtheorem{question}[theorem]{Question}

\newcommand\twoheaduparrow{\mathord{\rotatebox[origin=c]{90}{$\twoheadrightarrow$}}}
\newcommand\twoheaddownarrow{\mathord{\rotatebox[origin=c]{90}{$\twoheadleftarrow$}}}
\newcommand{\dda}{\twoheaddownarrow}
\newcommand{\dua}{\twoheaduparrow}
\newcommand{\ua}{\mathord{\uparrow}}
\newcommand{\da}{\mathord{\downarrow}}

\newcommand{\mn}{\mathbb N}

\begin{document}
	
	\begin{frontmatter}
		
		\title{Convergence Choquet-complete spaces and domain representations}
		
		
		%
		%

		\author{Xiaoyong Xi}
		\address{School of Mathematics and Statistics, Yancheng Teachers University, Jiangsu, Yancheng, China}
		
		\author{Chong Shen\corref{cor1}}
		\ead{shenchong0520@163.com}
		\cortext[cor1]{Corresponding author.}                %
		\address{School of Science, Beijing University of Posts and Telecommunications, Beijing, China}
		
			\author{Weng Kin Ho}
		\address{Mathematics and Mathematical Education, National Institute of Education,
			Nanyang Technological University,  1 Nanyang Walk, Singapore}

		\author{Dongsheng Zhao}
		\address{Mathematics and Mathematical Education, National Institute of Education,
				Nanyang Technological University,  1 Nanyang Walk, Singapore}

\begin{abstract}
de Brecht, Goubault-Larrecq, Jia and Lyu asked whether every sober
convergence Choquet-complete space is domain-complete. We introduce the
notion of singleton Choquet-completeness, a weakening of convergence
Choquet-completeness in which the open sets chosen by player $\alpha$
are required to have a singleton intersection, but not necessarily to
form a neighbourhood basis. We prove that 
every singleton Choquet-complete $T_1$ space is domain-representable.
Consequently, every convergence Choquet-complete $T_1$ space is
domain-representable and hence sober. Thus, in the $T_1$ case, the
sobriety assumption in the above question is redundant, and the question
reduces to whether every convergence Choquet-complete $T_1$ space is
domain-complete.
\end{abstract}
		
		\begin{keyword}
			singleton Choquet-complete space, convergence Choquet-complete space, maximal filter space, Scott topology, domain representation, algebraic dcpo
			\MSC[2010] 06B35 \sep 06B30 \sep 54A05
		\end{keyword}
		
	\end{frontmatter}
	

	\section{Introduction}
	
Domain theory provides a powerful order-theoretic framework for topology.
The Scott topology on a dcpo plays a central role in domain theory and theoretical
computer science, but it is, in general, only $T_0$. However, by considering the
space $\operatorname{Max}(P)$ of maximal points of a poset $P$, equipped with the
relative Scott topology, one can obtain all $T_1$ spaces \cite{zhao,xi-zhao}. A
topological space $X$ is said to have a \emph{poset model} if it is homeomorphic
to such a maximal point space. In particular, spaces admitting a domain model are
called \emph{domain-representable} by Bennett and Lutzer
\cite{bennett,bennett1}, whereas Martin refers to them as spaces \emph{having a
	domain model} in several of his papers (e.g., \cite{martin}). A classical example
is the poset $\mathbb{IR}$ of all bounded closed intervals of real numbers,
ordered by reverse inclusion, which is a bounded complete $\omega$-domain model of the Euclidean space
$\mathbb{R}$.

Spaces with domain models have been extensively studied. Lawson proved that a
space is Polish if and only if it has an $\omega$-domain model satisfying the
Lawson condition \cite{Lawson97}. Martin \cite{ref3} showed that the maximal point
space $\operatorname{Max}(P)$ of an $\omega$-continuous dcpo $P$ is regular if and
only if it is Polish. Formal balls of metric spaces, introduced by Weihrauch and
Schreiber as a domain-theoretic representation of metric spaces
\cite{formal-1}, were later used by Edalat and Heckmann to prove that every
complete metric space is domain-representable \cite{formal-2}.

As Martin highlighted in \cite{ideal}, the property that $\operatorname{Max}(P)$ constitutes a $G_\delta$ subset is of great utility across a wide range of applications. A typical example can be found in Edalat's work \cite{edalat}, which explores the connections between measure theory and probabilistic powerdomains. His analysis hinges on the key assumption that each separable metric space can be embedded as a $G_\delta$-subspace within a countably based domain. Inspired by these application scenarios, Martin established two general results characterizing the $G_\delta$ property of spaces of maximal elements:
\begin{enumerate}
	\item Let $P$ be an $\omega$-domain. If $\operatorname{Max}(P)$ is regular, then $\operatorname{Max}(P)$ is a $G_\delta$ subset of $P$ \cite{ref3};
	\item Any $G_\delta$ subspace of a domain-representable space possesses an ideal domain model \cite{ideal};
	\item A $G_{\delta}$ subset of a Choquet complete space (particularly of a domain representable space) is Choquet complete \cite{ref3,hk}.
\end{enumerate}

More recently, de Brecht, Goubault-Larrecq, Jia, and Lyu \cite{LCS} studied
domain-complete spaces and LCS-complete spaces. In particular, they proved
that every continuous valuation on a $G_\delta$-subset of a locally compact
sober space extends to a measure. They also
introduced the weaker notion of \emph{convergence Choquet-completeness}, defined
via the strong Choquet game, and posed the following question.

\begin{question}
	Is every sober convergence Choquet-complete space domain-complete?
\end{question}

The main purpose of this paper is to show that, for $T_1$ spaces, the
sobriety assumption is unnecessary. To do this, we first introduce a
slight weakening of convergence Choquet-completeness, called
\emph{singleton Choquet-completeness}. Roughly speaking, this means that
player $\alpha$ has a winning strategy in the strong Choquet game such
that, for every play following this strategy, the open sets chosen by
$\alpha$ have a singleton intersection.

In fact, we prove the following slightly stronger result. For every
singleton Choquet-complete $T_1$ space $X$, we construct a poset $P_X$
such that $X$ is homeomorphic to $\operatorname{MF}(P_X)$, the space of
maximal filters on $P_X$. Our construction follows the approach to poset
spaces developed by Mummert and Stephan \cite{MummertS2010}: the basic
open sets of $X$ are encoded as conditions, and the order records finite
extensions of plays compatible with a fixed singleton winning strategy
for Player $\alpha$.

Finally, we show that the filter poset
$(\operatorname{Filt}(P_X),\subseteq)$ is an algebraic dcpo whose compact
elements are precisely the principal filters. Moreover, the topology on
$\operatorname{MF}(P_X)$ coincides with the subspace topology inherited from the
Scott topology on this algebraic dcpo. Consequently, $X$ admits a domain model.

\section{Preliminary}	
	The next part is devoted to a brief review of some basic concepts and notations that will be used later. For more details, we refer the readers to \cite{redbook,goubault}.
	
A \textit{poset} is a set $P$ equipped with a reflexive, antisymmetric, transitive binary relation $\leqslant$. 	For a subset $A$ of a poset $P$, we shall adopt the following standard notations:
	$$\ua A=\{y\in P:  \exists x\in A, x\leqslant y\};\ \da A=\{y\in P: \exists x\in A, y\leqslant x\}.$$
	For each $x\in X$, we simply write $\ua x$ and $\da x$ for $\ua\{x\}$ and  $\da \{x\}$, respectively. A subset $A$ of $P$ is called a \emph{lower} (resp., an \emph{upper}) \emph{set} if $A=\da A$ (resp., $A=\ua A$).
	An element $x$ is \emph{maximal}  in $A\subseteq P$, if  $A\cap \ua x=\{x\}$. The set of all maximal elements of $A$ is denoted by Max$(A)$. The set of minimal elements of $A$, denoted by Min$(A)$, is  defined dually. 
	
	A nonempty subset $D$ of $P$ is \emph{directed}  if every two
	elements in $D$ have an upper  bound in $D$.  A subset $X$ of $P$ is called an \emph{ideal} if $X$ is a directed lower set.   The set of all ideals of $P$ is denoted by Idl$(P)$.  A poset is said to be \emph{directed complete } (a \emph{dcpo} for short) if every directed subset has a sup.

	For  $x,y\in P$,  $x$ is \emph{way-below} $y$, denoted by $x\ll  y$, if for each directed subset
	$D$ of $P$ with  $\bigvee D$ existing,
	$y\leqslant\bigvee D$
	implies $x\leqslant d$ for some $d\in D$.  Denote $\dua x=\{y\in P:  x\ll y\}$ and $\dda x=\{y\in P:  y\ll x\}$. A poset $P$ is
	\emph{continuous}, if for each $x\in P$, the set $\dda x$ is directed and
	$x= \bigvee\dda x$.

	An element $a\in P$ is called \emph{compact}, if $a\ll a$. The set of all compact elements of $P$ will be denoted by   $K(P)$.
	Then, $P$ is called \emph{algebraic} if for each $x\in P$, the set $\{a\in K(P):a\leqslant x\}$ is directed and
	$x=\bigvee\{a\in K(P):  a\leqslant x\}$.
	A continuous (resp., algebraic) dcpo  is also called a \emph{domain} (resp., an \emph{algebraic domain}).
	A subset $B\subseteq P$ is a {\em base} of  $P$ if  for each $x\in P$, $B\cap \dda x$ is directed and $\bigvee(B\cap \dda x)$=$x$.

	\begin{remark}
		\begin{itemize}
			\item [(1)] A dcpo is a domain if and only if it has a base.
			\item [(2)] For each base $B$ of a domain $P$, $K(P)\subseteq B$. As a consequence, every algebraic domain $P$ has the smallest base, namely $K(P)$.
		\end{itemize}	
	\end{remark}
	
	A subset $U$ of  $P$ is \emph{Scott open} if
	(i) $U=\mathord{\uparrow}U$ and (ii) for each directed subset $D$ of $P$ for
	which $\bigvee D$ exists, $\bigvee D\in U$ implies $D\cap
	U\neq\emptyset$. All Scott open subsets of $P$ form a topology on $P$,
	called the \emph{Scott topology} and denoted by $\sigma(P)$. The space $\Sigma P=(P,\sigma(P))$ is called the
	\emph{Scott space} of $P$.
	
	The notion of ideal domain is introduced by K. Martin \cite{ideal}, defined as follows:
	\begin{definition}[\cite{ideal}]
		A continuous dcpo is \emph{ideal} if every element is either compact or maximal.
	\end{definition}

	The following lemma will be used in the sequel.
	
	\begin{lemma}[\cite{ideal}]\label{new-lem-2}
		An ideal domain is algebraic and first-countable in its Scott topology.
	\end{lemma}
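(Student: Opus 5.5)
We must prove Lemma \ref{new-lem-2}: an ideal domain $P$ (a continuous dcpo in which every element is compact or maximal) is algebraic, and $\Sigma P$ is first-countable. The plan is to show algebraicity first, by proving that every element is a directed sup of compact elements. Then we use algebraicity to produce a countable local base at each point.

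For algebraicity, fix $x\in P$. If $x$ is compact, then $\{a\in K(P):a\leqslant x\}$ has top element $x$, so it is directed with supremum $x$. Otherwise $x$ is maximal and not compact. Take any $y\ll x$. If $y$ were maximal, then $y\leqslant x$ would force $y=x$, so $x\ll x$ and $x$ would be compact, a contradiction. Hence every $y\ll x$ is non-maximal and therefore compact, so $\dda x\subseteq K(P)\cap\da x$. Conversely, every compact $a\leqslant x$ satisfies $a\ll a\leqslant x$, so $a\ll x$. Thus $K(P)\cap\da x=\dda x$, which is directed with supremum $x$ by continuity. So $P$ is algebraic.

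For first countability, recall that in an algebraic dcpo the sets $\ua a$ with $a\in K(P)$ form a base of the Scott topology, and that $\{\ua a: a\in K(P)\cap\da x\}$ is a local base at $x$. At a compact point $x$ the single open set $\ua x$ is already a local base. The main obstacle is a maximal non-compact $x$, where $D=K(P)\cap\da x$ may be uncountable and we must extract a countable cofinal part. I would first try the following. Suppose no countable subset of $D$ is cofinal in $D$. Then transfinite recursion should produce a strictly increasing chain $(a_\gamma)$ in $D$ of length $\omega_1$. At limit stages of countable cofinality, the sup $s$ of the chain so far lies below $x$. Either $s=x$, which contradicts the failure of countable cofinality, because then every compact $d\leqslant x$ is below some $a_\gamma$ by compactness. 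Or $s<x$, so $s$ is not maximal and hence is compact, and we may continue the recursion from $s$.

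I expect this cofinality step to be delicate. Continuity together with the dichotomy does not obviously bound the cofinality of $D$ by $\omega$. If this is where Martin's original argument lies, I would check whether first countability needs an extra hypothesis, for instance an $\omega$-continuity assumption implicit in \cite{ideal}. Failing that, I would look for an argument showing that $\omega_1$-chains cannot occur, by exploiting that every intermediate sup is compact.
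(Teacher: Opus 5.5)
Your algebraicity argument is correct: for a maximal non-compact $x$ you show $\dda x=K(P)\cap\da x$, and continuity does the rest. The genuine gap is first countability. You leave it unproved and suggest it may need $\omega$-continuity. It does not. The missing observation is that \emph{a compact element is never the supremum of a strictly increasing sequence} $a_0<a_1<\cdots$. Indeed, if $s=\bigvee_n a_n$ were compact, then $s\ll s$ applied to the directed set $\{a_n\}$ gives $s\leqslant a_n$ for some $n$. Hence $s=a_n<a_{n+1}\leqslant s$, which is absurd. So the branch of your recursion ``$s<x$, hence $s$ is compact, continue from $s$'' can never occur. Your recursion therefore already ends at stage $\omega$ with $s=x$, and no $\omega_1$-chains can arise.

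Here is the direct argument. Let $x$ be maximal and non-compact, and put $D=K(P)\cap\da x$. Then $D$ has no largest element, since otherwise $x=\bigvee D$ would equal it and be compact. Given $a\in D$, pick $d\in D$ with $d\not\leqslant a$ and an upper bound $b\in D$ of $a$ and $d$; then $a<b$. So you can choose a strictly increasing sequence $a_0<a_1<\cdots$ in $D$. Its supremum $s$ is not compact by the observation above, so it is maximal by the ideal hypothesis. Since $s\leqslant x$, this forces $s=x$. Each $\ua a_n$ is Scott open because $a_n$ is compact. Any Scott open $U\ni x=\bigvee_n a_n$ contains some $a_n$, hence $\ua a_n\subseteq U$. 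Thus $\{\ua a_n:n\in\mathbb N\}$ is a countable neighbourhood base at $x$. At compact points, $\ua x$ alone is a neighbourhood base, as you noted. No transfinite recursion and no countability assumption on $P$ are needed.
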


	A \textit{filter} on a poset $P$ is a nonempty subset $F\subseteq P$ satisfying two axioms:
	\begin{enumerate}[label=(F\arabic*)]
		\item Downward directed: For every $p,q\in F$, there exists $r\in F$ such that $r\leqslant p$ and $r\leqslant q$;
		\item Upward closed: If $p\in F$ and $p\leqslant q$, then $q\in F$.
	\end{enumerate}

	A proper filter $F$ on $P$ is said to be an \emph{ultrafilter} (or a \emph{maximal filter}) if $F$ is maximal  among all proper filters on $P$; equivalently, there exists no proper filter properly containing $F$. Write $\operatorname{MF}(P)$ for the collection of all maximal filters on $P$. We topologize $\operatorname{MF}(P)$ using the basis $\{N_p :  p\in P\}$, where
	$$
	N_p = \{F\in \operatorname{MF}(P) : p\in F\}.
	$$

\begin{remark}\label{rem-1}
	Suppose $P$ is a poset, and let
	$$
	\operatorname{Filt}(P)=\{F\subseteq P: F \text{ is a filter on } P\},
	$$
	ordered by inclusion. Then $(\operatorname{Filt}(P),\subseteq)$ is an
	algebraic dcpo whose compact elements are precisely the principal filters
	$\uparrow p$, where $p\in P$. 
	Consequently, the Scott topology has a basis consisting of sets of the form
	$$
	\mathcal U_p
	=
	\{F\in\operatorname{Filt}(P):\uparrow p\subseteq F\}
	=
	\{F\in\operatorname{Filt}(P):p\in F\},
	$$
	where $p\in P$. Restricting this topology to the maximal points
	$\operatorname{MF}(P)\subseteq \operatorname{Filt}(P)$, we obtain
	$$
	\mathcal U_p\cap \operatorname{MF}(P)
	=
	\{F\in\operatorname{MF}(P):p\in F\}
	=
	N_p.
	$$
	Thus the topology on $\operatorname{MF}(P)$ given by the basis
	$\{N_p:p\in P\}$ is exactly the topology inherited from the Scott topology on
	the algebraic dcpo $(\operatorname{Filt}(P),\subseteq)$.
\end{remark}


	Let $X$ be a topological space. The \textit{strong Choquet game} on $X$ (see \cite{goubault}) is played by two players $\alpha$ and $\beta$, with $\beta$ moving first, following the rules below:
	\begin{enumerate}
		\item  Player $\beta$ picks a point $x_0\in X$ and an open set $V_0$ such that $x_0\in V_0$.
	Then player $\alpha$ responds by choosing an open set $U_0$ such that $x_0\in U_0$ and $U_0\subseteq V_0$.
		\item  Player $\beta$ selects a new point $x_1\in U_0$ and
		an open set $V_1$ such that $x_1\in V_1$ and $V_1\subseteq U_0$.
		\item The players alternate infinitely, producing a decreasing sequence of open sets
		$$
		V_0 \supseteq U_0 \supseteq V_1 \supseteq U_1 \supseteq V_2 \supseteq U_2 \supseteq \cdots
		$$
		satisfying 
		\begin{center}
			$x_n\in U_n$ and $x_n\in V_n$
		\end{center} for all $n\in\mathbb{N}$.
	\end{enumerate}
	
	An \textit{$\alpha$-history} is a finite sequence
	$$
	x_0,V_0,U_0,x_1,V_1,U_1,\dots,x_n,V_n
	$$
	obeying the above inclusion and membership rules.
	
	A \textit{strategy for $\alpha$} is a map $\sigma$ that takes any $\alpha$-history as input and outputs an open set $U_n$ such that $x_n\in U_n\subseteq V_n$. This map encodes how $\alpha$ reacts to every possible sequence of moves by $\beta$. Strategies are not required to be stationary: both players have full access to all prior points $x_k$ and open sets $U_k,V_k$ played in earlier rounds.

	\begin{enumerate}
		\item A space $X$ is \textit{Choquet-complete} (see \cite{goubault}) if and only if player $\alpha$ possesses a winning strategy, meaning that whatever  $\beta$ plays, $\alpha$ always has a way of playing such that 
		$\bigcap_{n\in\mathbb{N}} U_n \left(= \bigcap_{n\in\mathbb{N}} V_n \right) \neq \emptyset$.
		\item A space $X$ is \textit{convergence Choquet-complete} (see \cite{CCh-complete}) if $\alpha$ has a  winning strategy such that the sequence $(U_n)_{n\in\mathbb{N}}$ forms a neighborhood basis of open sets for some single point of $X$.
	\end{enumerate}

\begin{remark}\label{rem:convergence-first-countable}
	Every convergence Choquet-complete $T_1$ space is first-countable.
	Indeed, let $s_\alpha$ be a convergent winning strategy for player
	$\alpha$, and fix $x\in X$. Let player $\beta$ play the same point $x$ at
	each inning, together with legal open neighbourhoods of $x$. Then every
	open set $U_n$ chosen by player $\alpha$ contains $x$. Since
	$s_\alpha$ is convergent, the sequence $(U_n)_{n\in\mathbb N}$ forms a
	neighbourhood basis at some point, say $y\in X$.
	
	We claim that $y=x$. If $y\neq x$, then, since $X$ is $T_1$, there exists
	an open neighbourhood $O$ of $y$ such that $x\notin O$. As $(U_n)$ is a
	neighbourhood basis at $y$, there exists $n$ such that $U_n\subseteq O$.
	But $x\in U_n$, a contradiction. Hence $y=x$. Therefore $(U_n)$ is a
	countable neighbourhood basis at $x$. Since $x$ was arbitrary, $X$ is
	first-countable.
\end{remark}

	\section{Main results}
	
	We first introduce a slight weakening of convergence Choquet-completeness.
	
	\begin{definition}
		A space $X$ is called \textit{singleton Choquet-complete} if player
		$\alpha$ has a strategy in the strong Choquet game such that, for every
		play following this strategy, there exists a point $x\in X$ with
		\[
		\bigcap_{n\in\mathbb N}U_n=\{x\},
		\]
		where $U_n$ denotes the open set chosen by player $\alpha$ at the $n$-th
		inning.
	\end{definition}
	
	It is clear that every convergence Choquet-complete $T_1$ space is
	singleton Choquet-complete. The converse, however, is not true, as the following example shows.
\begin{example}\label{ex:join-topology}
	Let
	\[
	\tau=\tau_{\mathrm{usual}}\vee\tau_{\mathrm{cocountable}}
	\]
	be the join of the usual topology and the cocountable topology on
	$\mathbb R$. Then $(\mathbb R,\tau)$ is singleton Choquet-complete, but
	not convergence Choquet-complete.
	
	First, we show that $(\mathbb R,\tau)$ is singleton Choquet-complete.
	A basic open neighbourhood of a point $x\in\mathbb R$ is of the form
	$U\setminus C$, where $U$ is a usual open neighbourhood of $x$ and $C$ is
	countable with $x\notin C$.
	
Suppose player $\beta$ plays $x_n\in V_n$ at the $n$-th inning. Choose a
basic neighbourhood $O_n\setminus C_n$ of $x_n$ such that
$x_n\in O_n\setminus C_n\subseteq V_n$. We describe a strategy for
player $\alpha$. During the construction, player $\alpha$ also keeps a
finite set $F_n\subseteq\bigcup_{m\leq n}C_m$ such that every point of
$\bigcup_{n\in\mathbb N}C_n$ eventually belongs to some $F_n$.
For instance, one may enumerate the points of the countable union as they
appear and let $F_n$ consist of the first finitely many points in this list.
Player $\alpha$ chooses a usual open interval $I_n$ such that
$x_n\in I_n\subseteq O_n$,
$\overline{I_n}\subseteq I_{n-1}$ if $n>0$,
$\operatorname{diam}(I_n)<2^{-n}$, and
$\overline{I_n}\cap F_n=\emptyset$. This is possible because $F_n$ is
finite and $x_n$ belongs to none of the sets $C_0,\ldots,C_n$.
Indeed, if $m<n$, then $x_n\in V_n\subseteq U_{n-1}\subseteq \mathbb R\setminus C_m$,
while $x_n\notin C_n$ by the choice of $O_n\setminus C_n$.
Player $\alpha$ then plays
\[
U_n=I_n\setminus\bigcup_{m\leq n}C_m.
\]
This is a legal move.

The closed intervals $\overline{I_n}$ are nested and have diameters
tending to $0$. Hence
\[
\bigcap_{n\in\mathbb N}\overline{I_n}=\{p\}
\]
for some $p\in\mathbb R$. Since every point of
$\bigcup_{n\in\mathbb N}C_n$ eventually belongs to some $F_n$, and the
later intervals are nested inside the earlier ones, the condition
$\overline{I_n}\cap F_n=\emptyset$ ensures that
$p\notin\bigcup_{n\in\mathbb N}C_n$. Therefore
\[
p\in\bigcap_{n\in\mathbb N}U_n.
\]
Conversely, any point in $\bigcap_{n\in\mathbb N}U_n$ belongs to
$\bigcap_{n\in\mathbb N}\overline{I_n}=\{p\}$. Thus
\[
\bigcap_{n\in\mathbb N}U_n=\{p\}.
\]
So $(\mathbb R,\tau)$ is singleton Choquet-complete.

We now show that $(\mathbb R,\tau)$ is not convergence Choquet-complete.
Indeed, it is easy to see that $(\mathbb R,\tau)$ is not first-countable.
By Remark~\ref{rem:convergence-first-countable}, every convergence
Choquet-complete $T_1$ space is first-countable. Therefore
$(\mathbb R,\tau)$ is not convergence Choquet-complete.
	
\end{example}

Let $X$ be a singleton Choquet-complete $T_1$ space. Fix a basis
$\mathcal B$ for the topology of $X$, and fix a singleton winning
strategy $s_{\alpha}$ for player $\alpha$ in the strong Choquet game on
$X$. We use the poset of conditions constructed by Mummert and Stephan
\cite{MummertS2010}, denoted here by $P_X$. For completeness, we recall
its construction in the present setting.
	
	A condition $c$ is a finite list of the form
	$$
	\langle A,\pi_1,\pi_2,\dots,\pi_k\rangle
	$$
	satisfying the following requirements:
	\begin{enumerate}[label=(C\arabic*)]
		\item $A$ is a nonempty member of $\mathcal B$. For any condition $c$,
		write $S(c)=A$ for the basic open set appearing as the first entry of $c$.
		\item The empty play $\langle\rangle$ belongs to $c$.
		\item Each $\pi_i$ is a finite partial play of the strong Choquet game on
		$X$ following the fixed strategy $s_\alpha$, and each such play terminates
		with a move by player $\alpha$. We write $U(\pi_i)$ for its last open set.
		For the empty play, put $U(\langle\rangle)=X$.
		\item If a play $\pi$ belongs to $c$, then every initial segment of $\pi$
		ending with a move by player $\alpha$ also belongs to $c$.
		\item $A\subseteq U(\pi_i)$ for every play $\pi_i$ occurring in $c$.
	\end{enumerate}
	
	Let $c=\langle A,\pi_1,\dots,\pi_k\rangle$ and $c'=\langle A',\pi_1',\dots,\pi_l'\rangle$ be two conditions. We define $c'<c$ if and only if the following two statements hold:
	\begin{enumerate}[label=(P\arabic*)]
		\item For every finite play $\pi_i$ contained in $c$, there exists a point $x\in S(c)=A$ such that the extended play
		$$
		\pi_i \frown \langle A,x,s_{\alpha}\big(\pi_i \frown \langle A,x\rangle\big)\rangle
		$$
		appears as some $\pi_j'$ in $c'$.
		\item $A'\subseteq A$.
		This condition ensures that stronger conditions have smaller associated basic
		open sets.
	\end{enumerate}
	
	The relation $\leqslant$ defined by
	$$
	c'\leqslant c \quad \Longleftrightarrow \quad c'<c \text{ or } c'=c
	$$
	is a partial order on $P_X$. Thus stronger conditions are smaller in the
	order. 
	
	\begin{remark}
		It is worth noting that
	\begin{itemize}
		\item $S:P_Xo \mathcal O(X)$ is order-preserving, that is, $c\leqslant c'$ implies $S(c)\subseteq S(c')$. 
	\end{itemize}	
	\end{remark}

	\begin{lemma}\label{lem-1}
		Suppose that $X$ is singleton Choquet-complete. For any decreasing countable chain 
		$$c_0>c_1>c_2>c_3>\cdots>c_n>c_{n+1}>\cdots$$
		in $P_X$, there exists a point $x\in X$ such that
		$$
		\bigcap_{i\in\mn}S(c_i)=\{x\}.
		$$
	\end{lemma}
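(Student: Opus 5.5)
The approach is to extract from the chain a single infinite play of the strong Choquet game that follows $s_\alpha$, and whose $\alpha$-moves interleave with the basic open sets $S(c_i)$. The singleton property of $s_\alpha$ will then pin down the intersection.

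\textbf{Step 1 (building the play).} Every condition contains the empty play $\langle\rangle$, by (C2). Set $\pi^{(0)}=\langle\rangle$. Since $c_1<c_0$ and $\pi^{(0)}\in c_0$, clause (P1) gives a point $y_0\in S(c_0)$ such that
\[
\pi^{(1)}=\pi^{(0)}\frown\langle S(c_0),y_0,s_\alpha(\pi^{(0)}\frown\langle S(c_0),y_0\rangle)\rangle
\]
is one of the plays of $c_1$. Inductively, suppose $\pi^{(n)}\in c_n$. Since $c_{n+1}<c_n$, clause (P1) gives $y_n\in S(c_n)$ such that
\[
\pi^{(n+1)}=\pi^{(n)}\frown\langle S(c_n),y_n,U_n\rangle\in c_{n+1}, \qquad U_n=s_\alpha(\pi^{(n)}\frown\langle S(c_n),y_n\rangle).
\]
The plays $\pi^{(n)}$ are successive extensions of one another, so their union is an infinite play in which $\beta$ plays $(y_n,V_n)$ with $V_n=S(c_n)$, and $\alpha$ answers with $U_n$ according to $s_\alpha$.

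\textbf{Step 2 (legality).} We must check that this is a legal play. Because $\pi^{(n)}\in c_n$ and its last open set is $U_{n-1}$, clause (C5) gives $V_n=S(c_n)\subseteq U_{n-1}$. We also have $y_n\in V_n$. Since each $\pi^{(n+1)}$ is a partial play following $s_\alpha$ by (C3), $\alpha$'s move is legal, so $y_n\in U_n\subseteq V_n$. Hence the whole sequence is a play following $s_\alpha$.

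\textbf{Step 3 (conclusion).} By the singleton property of $s_\alpha$, there is a point $x$ with $\bigcap_n U_n=\{x\}$. We have $U_n\subseteq V_n=S(c_n)$, and by (C5) applied to $\pi^{(n+1)}\in c_{n+1}$ we have $S(c_{n+1})\subseteq U_n$. These two inclusions interleave, so
\[
\bigcap_n S(c_n)=\bigcap_n U_n=\{x\}.
\]

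\textbf{Main obstacle.} The delicate point is the recursive choice in Step 1. At stage $n$ we must extend exactly the play $\pi^{(n)}$ obtained at the previous stage, not an arbitrary play of $c_n$. This is what makes the plays form a single coherent infinite play. Clause (P1) permits this because it applies to every play contained in $c_n$. The recursion needs countably many dependent choices of the $y_n$, which is harmless.

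We should also confirm that the format in (P1), $\langle A,x,\ldots\rangle$ with $\beta$ choosing the basic open set $A=S(c_n)$ together with the point $x\in A$, matches a legal $\beta$-move. It does: $A$ is open, $x\in A$, and $A\subseteq U(\pi^{(n)})$ by (C5).
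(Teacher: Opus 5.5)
Your proof is correct and takes essentially the same approach as the paper. Like the paper, you build one infinite $s_\alpha$-play through the chain by applying (P1) at each stage to the play chosen at the previous stage, then invoke the singleton property; the only differences are cosmetic (you start from the empty play rather than an arbitrary play of $c_0$, and you get equality directly from the sandwich $S(c_{n+1})\subseteq U_n\subseteq S(c_n)$ instead of proving two separate inclusions).
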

\begin{proof}
	It is clear that 
$$S(c_0) \supseteq S(c_1)\supseteq S(c_2) \supseteq \cdots \supseteq S(c_n) \supseteq S(c_{n+1}) \supseteq \cdots.$$
	
	We inductively construct a sequence of finite plays $\langle \pi_i :  i \in \mathbb{N} \rangle$ such that $\pi_{i+1}$ is an immediate extension of $\pi_i$ for all $i \in \mathbb{N}$.
	\begin{itemize}
		\item At stage $0$, pick an arbitrary finite play $\pi_0$ contained in $c_0$.
		\item At stage $i + 1$, as $c_{i+1}< c_i$, one can choose $\pi_{i+1}$ to be a finite play in $c_{i+1}$ which is an immediate extension of $\pi_i$. Explicitly, since $c_{i+1}<c_i$, there exists a point $x\in S(c_i)$ such that
		$$
		\pi_{i+1} := \pi_i \frown \big\langle S(c_i),x,s_{\alpha}\big(\pi_i \frown \langle S(c_i),x\rangle\big)\big\rangle \in c_{i+1}.
		$$
	\end{itemize}
	Once the sequence $\langle \pi_i \rangle_{i\in\mathbb{N}}$ is constructed,
	these increasing finite plays determine an infinite play $\gamma$ of the
	strong Choquet game following the fixed strategy $s_\alpha$. Since
	$s_\alpha$ is a singleton winning strategy, there exists $x\in X$ such
	that
	$$
	\bigcap_{i\in\mathbb N}U(\pi_i)=\{x\}.
	$$
	Moreover, if $m\leqslant j$, then $\pi_m$ is an initial segment of
	$\pi_j$, hence $\pi_m$ occurs in $c_j$. Therefore
	$S(c_j)\subseteq U(\pi_m)$. It follows that
	$$
	\bigcap_{i\in\mathbb N}S(c_i)\subseteq
	\bigcap_{i\in\mathbb N}U(\pi_i)=\{x\}.
	$$
	On the other hand, since each $S(c_i)$ is one of the open sets played by
	player $\beta$ along $\gamma$, and since $x$ belongs to all open sets in
	the play, we have $x\in S(c_i)$ for every $i\in\mathbb N$. Hence
	$\bigcap_{i\in\mathbb N}S(c_i)=\{x\}$.
\end{proof}

	\begin{lemma}\label{lem-2}
Let $P$ be a poset, let $X$ be a $T_1$ space, and let
$S:Po(\mathcal O(X),\subseteq)$ be order-preserving. Suppose that $F$ is a
filter on $P$ with no smallest element and that, for every strictly decreasing
chain
$$
c_0>c_1>c_2>\cdots
$$
contained in $F$, there exists a point $x_{(c_i)}\in X$ such that
$$
\bigcap_{i\in\mn}S(c_i)=\{x_{(c_i)}\}.
$$
Then there exists a unique point $x\in X$ such
that
$$
\bigcap_{c\in F} S(c)=\{x\}.
$$
	\end{lemma}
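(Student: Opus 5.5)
The plan is to fix one strictly decreasing chain in $F$ and take the point $x$ it determines through the hypothesis. We then show that $x$ lies in $S(c)$ for every $c\in F$. To do this, we build a second chain that starts below $c$ and stays below the first chain. Everything rests on one observation about filters with no smallest element. Suppose $d\in F$. Since $d$ is not the smallest element of $F$, there is some $e\in F$ with $d\not\leqslant e$. By (F1), $d$ and $e$ have a common lower bound $d'\in F$. Then $d'\leqslant d$, and $d'\neq d$ because $d'\leqslant e$ while $d\not\leqslant e$; hence $d'<d$. Applying (F1) once more, we may also require $d'$ to lie below any prescribed finite set of elements of $F$.

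\textbf{Step 1 (existence of a chain and of $x$).} Start from any $c_0\in F$; filters are nonempty. Apply the observation repeatedly to obtain a strictly decreasing chain $c_0>c_1>c_2>\cdots$ in $F$. By hypothesis there is a point $x$ with $\bigcap_{i\in\mn}S(c_i)=\{x\}$.

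\textbf{Step 2 ($x\in S(c)$ for every $c\in F$).} Fix $c\in F$. Choose $d_0\in F$ with $d_0\leqslant c$ and $d_0\leqslant c_0$. Inductively, given $d_i\in F$, use the observation to pick $d_{i+1}\in F$ with $d_{i+1}<d_i$ and $d_{i+1}\leqslant c_{i+1}$. This yields a strictly decreasing chain $(d_i)$ in $F$ with $d_i\leqslant c_i$ for all $i$, and with $d_0\leqslant c$. By hypothesis, $\bigcap_i S(d_i)=\{y\}$ for some $y$. Since $S$ is order-preserving, $y\in S(d_i)\subseteq S(c_i)$ for every $i$. Hence $y\in\bigcap_i S(c_i)=\{x\}$, so $y=x$. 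Then $x=y\in S(d_0)\subseteq S(c)$.

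\textbf{Step 3 (conclusion).} By Step 2, $x\in\bigcap_{c\in F}S(c)$. Conversely, $\bigcap_{c\in F}S(c)\subseteq\bigcap_i S(c_i)=\{x\}$. Therefore $\bigcap_{c\in F}S(c)=\{x\}$, and the point is unique since the intersection is a singleton.

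The only delicate point is the construction in Step 2. We need a chain that is strictly decreasing, so that the hypothesis applies, and that also sits below both $c$ and the original chain. The observation in the first paragraph is exactly what secures this. The $T_1$ assumption does not appear to be needed in this argument.
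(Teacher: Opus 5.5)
Your proof is correct and takes essentially the same route as the paper. Both arguments rest on two ingredients: every element of $F$ has a strictly smaller element in $F$, and one can build a strictly decreasing chain in $F$ lying below a given chain, so that order-preservation of $S$ forces the associated singleton points to coincide. You merge the paper's two steps (first showing that the point does not depend on the chain, then extending each $c\in F$ to a chain) into a single chain lying below both $c$ and the fixed chain $(c_i)$. You are also right that the $T_1$ hypothesis is never used; the paper's argument does not use it either.
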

	
	\begin{proof}
First observe that for every $a\in F$ there exists $b\in F$ such that
$b<a$. Indeed, since $a$ is not the smallest element of $F$, there is
$q\in F$ such that $a\not\leqslant q$. Since $F$ is filtered, choose
$b\in F$ with $b\leqslant a$ and $b\leqslant q$. Then $b\neq a$, and hence
$b<a$.

Choose a strictly decreasing chain
$$
c_0>c_1>c_2>\cdots
$$
in $F$. By assumption, there exists $x\in X$ such that
$$
\bigcap_{i\in\mn}S(c_i)=\{x\}.
$$

Let
$$
\widetilde c_0>\widetilde c_1>\widetilde c_2>\cdots
$$
be another strictly decreasing chain in $F$, and suppose that
there exists $y\in X$ such that
$$
\bigcap_{i\in\mn}S(\widetilde c_i)=\{y\}.
$$
We claim that $x=y$.

Construct a strictly decreasing chain
$$
d_0>d_1>d_2>\cdots
$$
in $F$ such that $d_i\leqslant c_i$ and $d_i\leqslant \widetilde c_i$ for all
$i$. This is done inductively. First choose a common lower bound $d_0$ of $c_0$
and $\widetilde c_0$ in $F$. At stage $i+1$, choose a common lower bound of
$d_i,c_{i+1},\widetilde c_{i+1}$ in $F$, and then choose a strictly smaller
element $d_{i+1}$ of $F$.

By assumption, there exists $z\in X$ such that
$$
\bigcap_{i\in\mn}S(d_i)=\{z\}.
$$
Since $S$ is order-preserving,
$$
\{z\}
=
\bigcap_{i\in\mathbb N} S(d_i)
\subseteq
\bigcap_{i\in\mathbb N} S(c_i)\cap \bigcap_{i\in\mathbb N} S(\widetilde c_i)
=
\{x\}\cap\{y\}.
$$
Hence $x=y$.

Finally, let $c\in F$. By the first paragraph, $c$ can be extended to a
strictly decreasing chain in $F$ starting with $c$. Its associated point must
be the same point $x$, by the uniqueness just proved. Therefore $x\in S(c)$.
Since $c\in F$ was arbitrary,
$$
x\in\bigcap_{c\in F}S(c).
$$
Conversely, since the chain $(c_i)_{i\in\mn}$ chosen above is contained in
$F$,
$$
\bigcap_{c\in F}S(c)\subseteq \bigcap_{i\in\mathbb N} S(c_i)=\{x\}.
$$
Therefore,
$
\bigcap_{c\in F}S(c)=\{x\}$.
	\end{proof}

\begin{lemma}\label{lem-3}
	Let $c_1,c_2\in P_X$ and let
	$$
	x\in S(c_1)\cap S(c_2).
	$$
	Then there exists a condition $c\in P_X$ such that
	$$
	c\prec c_1,\qquad c\prec c_2,
	\qquad x\in S(c).
	$$
\end{lemma}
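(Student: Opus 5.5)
We construct $c$ explicitly. Write $c_1=\langle A_1,\pi^1_1,\dots,\pi^1_{k}\rangle$ and $c_2=\langle A_2,\pi^2_1,\dots,\pi^2_{l}\rangle$, and read $\prec$ as the strict order $<$ on $P_X$. For every play $\pi$ occurring in $c_1$ we extend it as (P1) requires, using the point $x$ itself. Player $\beta$ plays $x\in A_1$ together with the open set $A_1$. This is a legal move: by (C5) we have $A_1\subseteq U(\pi)$, so $x\in A_1\subseteq U(\pi)$. We then let $\alpha$ answer with the fixed strategy, which gives the extended play
$$\pi\frown\langle A_1,x,s_\alpha(\pi\frown\langle A_1,x\rangle)\rangle .$$
We do the same for every play occurring in $c_2$, using $A_2$ in place of $A_1$. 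In each case $s_\alpha$ returns an open set containing $x$, because a legal response must contain $\beta$'s point.

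Let $\Pi$ be the finite set consisting of all plays of $c_1$, all plays of $c_2$, and all the extensions just built. The first key step is to check (C2)--(C4) for $\Pi$. The empty play belongs to $\Pi$. Each member of $\Pi$ follows $s_\alpha$ and ends with a move by $\alpha$. Closure under initial segments ending with an $\alpha$-move holds because the original plays already satisfy (C4) inside $c_1$ and $c_2$, and each extension adds a single round on top of one of these plays.

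Next we choose the basic set. Put
$$W=A_1\cap A_2\cap\bigcap_{\pi\in\Pi}U(\pi).$$
This is a finite intersection of open sets, each containing $x$: we assumed $x\in A_1\cap A_2$; every $U(\pi)$ with $\pi$ from $c_1$ contains $A_1\ni x$, and similarly for $c_2$; and each new $U$ contains $x$ by the remark above. So $W$ is an open neighbourhood of $x$. Since $\mathcal B$ is a basis, we pick $A\in\mathcal B$ with $x\in A\subseteq W$. Then $A$ is nonempty and satisfies (C5) for every play in $\Pi$. Listing $\Pi$ in any order after $A$ gives a condition $c=\langle A,\Pi\rangle$ with $x\in S(c)$.

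Finally we verify $c<c_1$ and $c<c_2$. Condition (P1) holds by construction, since each play of $c_i$ has its required one-step extension, with the witness point $x\in A_i$, in $c$. Condition (P2) holds because $A\subseteq A_1\cap A_2$.

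The main point needing care is that the relation is strict, that is, $c\neq c_i$. This follows from (P1) itself. The play $\langle\rangle$ lies in $c_i$, so its extension $\langle A_i,x,s_\alpha(\langle A_i,x\rangle)\rangle$ lies in $c$. If we had $c=c_i$, then $c_i$ would contain an infinite chain of ever longer one-step extensions, which is impossible for a finite list.

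A secondary subtlety is that $c_1$ and $c_2$ may contain overlapping or incompatible plays. This causes no problem, because a condition is merely a finite set of plays with no coherence requirement between different branches.
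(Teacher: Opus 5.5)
Your proof is correct and is essentially the paper's own argument: you extend each play of $c_i$ by letting $\beta$ play $x$ with $A_i$ and $\alpha$ answer via $s_\alpha$, close under initial segments (your $\Pi$ coincides with the paper's $\mathcal P$ by (C4)), and take a basic $A\ni x$ inside $A_1\cap A_2$ and all the sets $U(\rho)$. Your extra argument that $c\neq c_i$ is not needed for this lemma, since $<$ is defined directly by (P1)--(P2), but it correctly records the irreflexivity of $<$ that the paper tacitly uses in Proposition~\ref{prop-1}.
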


\begin{proof}
		For every finite play $\pi$ occurring in $c_i$, extend $\pi$ by
	letting Player $\beta$ play the point $x$ and the open set $A_i$,
	and then letting Player $\alpha$ respond according to the fixed
	strategy $s_\alpha$. Thus, form the play
	$$
	\widehat{\pi}
	=
	\pi\frown
	\left\langle
	A_i,x,
	s_\alpha\bigl(\pi\frown\langle A_i,x\rangle\bigr)
	\right\rangle .
	$$
	This is a legal extension because
	$$
	x\in A_i=S(c_i)\subseteq U(\pi).
	$$
	
	Let $\mathcal P$ consist of all such extended plays
	$\widehat{\pi}$, together with all their initial segments ending
	with a move by Player $\alpha$. Since $c_1$ and $c_2$ contain only
	finitely many plays, $\mathcal P$ is finite.
	
	Every $U(\rho)$, with $\rho\in\mathcal P$, is an open
	neighborhood of $x$. Hence
	$$
	W=\bigcap_{\rho\in\mathcal P}U(\rho)
	$$
	is an open neighborhood of $x$. Choose a nonempty basic open set
	$A$ such that
	$$
	x\in A\subseteq W\cap A_1\cap A_2.
	$$
	Define
	$$
	c=\langle A,\mathcal P\rangle.
	$$
	By construction, $c$ satisfies all the requirements of a condition
	in $P_X$, and $S(c)=A$ contains $x$.
	
	Moreover, every play occurring in $c_1$ has an immediate extension
	occurring in $c$, and the same holds for every play occurring in
	$c_2$. Therefore,  $c< c_1$
and
	$c< c_2$.
\end{proof}

\begin{proposition}\label{prop-1}
	Suppose that $X$ is a singleton Choquet-complete $T_1$ space.
	If $F$ is a filter on $P_X$ with no smallest element, then there exists
	a unique point $x\in X$ such that
	$$
	\bigcap_{c\in F}S(c)=\{x\}.
	$$
	In particular, the same conclusion holds for every maximal filter on $P_X$.
\end{proposition}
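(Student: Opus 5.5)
The first assertion should follow by applying Lemma~\ref{lem-2} with $P=P_X$ and $S$ the map sending a condition to its basic open set. The map $S$ is order-preserving by the remark following the definition of $P_X$: if $c'<c$ then (P2) gives $S(c')\subseteq S(c)$. The chain hypothesis of Lemma~\ref{lem-2} is exactly Lemma~\ref{lem-1}. Any strictly decreasing chain $c_0>c_1>\cdots$ in $F$ is a strictly decreasing chain in $P_X$, so Lemma~\ref{lem-1} yields a point $x$ with $\bigcap_i S(c_i)=\{x\}$. Since $X$ is $T_1$ and $F$ has no smallest element by assumption, Lemma~\ref{lem-2} gives a unique $x$ with $\bigcap_{c\in F}S(c)=\{x\}$. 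Uniqueness is immediate anyway, since the intersection is a singleton.

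The \emph{in particular} clause reduces to showing that a maximal filter $F$ on $P_X$ has no smallest element. Suppose instead that $c\in F$ is the least element of $F$, so $F=\ua c$. I will produce a proper filter strictly containing $\ua c$, which contradicts maximality. Pick a point $x\in S(c)$; this is possible because $S(c)$ is nonempty by (C1). Apply Lemma~\ref{lem-3} with $c_1=c_2=c$ to obtain a condition $d$ with $d<c$ and $x\in S(d)$. Then $\ua d$ is a filter, and it strictly contains $\ua c$ because $d\notin\ua c$. It remains to see that $\ua d$ is proper.

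Properness is the step needing care: a filter is proper when it is not all of $P_X$, so I must show $\ua d\neq P_X$. I plan to exhibit a condition not above $d$, and the simplest candidate is any condition strictly below $d$. Applying Lemma~\ref{lem-3} once more gives $e<d$, and then $e\notin\ua d$ by antisymmetry, so $\ua d\subsetneq P_X$. Hence $\ua c\subsetneq\ua d\subsetneq P_X$, contradicting the maximality of $F$. Therefore every maximal filter has no smallest element, and the first part applies to it.
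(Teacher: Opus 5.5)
Your proof is correct and follows the paper's argument: the first assertion comes from Lemma~\ref{lem-1} feeding the chain hypothesis of Lemma~\ref{lem-2}, and a maximal filter $\uparrow c_0$ is enlarged to $\uparrow d$ using a $d<c_0$ obtained from Lemma~\ref{lem-3}. Your second application of Lemma~\ref{lem-3}, which shows that $\uparrow d$ is a proper filter, fills in a check the paper leaves implicit. That check is genuinely needed, because $P_X$ can itself be a filter when nonempty open sets of $X$ always meet.
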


\begin{proof}
	The first assertion follows from Lemmas~\ref{lem-1} and~\ref{lem-2}.
	
	It remains to show that every maximal filter on $P_X$ has no smallest
	element. Suppose, to the contrary, that a maximal filter $F$ has a smallest
	element $c_0$. Then $F=\uparrow c_0$. Choose $x\in S(c_0)$. By
	Lemma~\ref{lem-3}, there exists $d\in P_X$ such that $d < c_0$ and
	$x\in S(d)$. Hence
	$$
	\uparrow c_0\subsetneq \uparrow d,
	$$
	contradicting the maximality of $F$. Therefore every maximal filter on
	$P_X$ has no smallest element.
\end{proof}

\begin{lemma}\label{lem-4}
	Suppose that $X$ is a singleton Choquet-complete $T_1$ space.
	For every $x\in X$, define 
	$$
	F_x=\{c\in P_X:x\in S(c)\}.
	$$
Then $F_x$	is a maximal filter on $P_X$ such that
 $$\bigcap_{c\in F_x}S(c)  = \{x\}.$$
\end{lemma}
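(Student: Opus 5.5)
We verify in turn that $F_x$ is a filter, that it is maximal, and that $\bigcap_{c\in F_x}S(c)=\{x\}$.

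\emph{$F_x$ is a filter.} Nonemptiness: $X$ is a neighbourhood of $x$, so choose a basic set $A\in\mathcal B$ with $x\in A$. Then $\langle A,\langle\rangle\rangle$ is a condition lying in $F_x$. Upward closure is immediate, because $S$ is order-preserving: if $x\in S(c)$ and $c\leqslant c'$, then $x\in S(c)\subseteq S(c')$. Downward directedness is exactly Lemma~\ref{lem-3}: given $c_1,c_2\in F_x$, we have $x\in S(c_1)\cap S(c_2)$, and the lemma yields $c<c_1$, $c<c_2$ with $x\in S(c)$. Hence $c\in F_x$.

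\emph{The intersection.} Lemma~\ref{lem-3}, applied with $c_1=c_2=c$, shows that $F_x$ has no smallest element. Proposition~\ref{prop-1} (its first assertion, which only needs a filter with no smallest element) then gives a unique $y$ with $\bigcap_{c\in F_x}S(c)=\{y\}$. Since $x$ belongs to every $S(c)$ with $c\in F_x$, we get $y=x$.

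\emph{Maximality.} This is the main point. Suppose $G\supseteq F_x$ is a proper filter; we must show $G\subseteq F_x$. First, $G$ has no smallest element. If $G=\uparrow d$, then every $c\in F_x$ satisfies $d\leqslant c$. Pick $c\in F_x$ (nonempty by the first part) and apply Lemma~\ref{lem-3} repeatedly to obtain $c>c'>c''>\cdots$ in $F_x$, all above $d$. Then $S(d)\subseteq\bigcap_{c\in F_x}S(c)=\{x\}$. Since $S(d)$ is nonempty, $S(d)=\{x\}$, so $d\in F_x$. By Lemma~\ref{lem-3} again there is $d'<d$ with $d'\in F_x\subseteq G$, contradicting minimality of $d$ in $G$. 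Proposition~\ref{prop-1} therefore applies to $G$ and gives a point $z$ with $\bigcap_{c\in G}S(c)=\{z\}$. As $F_x\subseteq G$,
\[
\{z\}\subseteq\bigcap_{c\in F_x}S(c)=\{x\},
\]
so $z=x$. Hence $x\in S(c)$ for every $c\in G$, that is, $G\subseteq F_x$, and so $G=F_x$.

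The delicate step is ruling out that the larger filter $G$ is principal; the argument above handles it by combining nonemptiness of basic sets with Lemma~\ref{lem-3}. Properness of $F_x$ is not a concern under the paper's definition of filter, since $P_X$ has no least element, so every filter is automatically proper.
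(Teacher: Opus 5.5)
Your argument is correct. Its filter and maximality parts coincide with the paper's proof: the paper also rules out a least element $g_0$ of $G$ via $S(g_0)\subseteq\{x\}$ and Lemma~\ref{lem-3}, and then applies Proposition~\ref{prop-1} to $G$.

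The genuine difference is how you get the identity $\bigcap_{c\in F_x}S(c)=\{x\}$. The paper derives it directly from $T_1$: for $y\neq x$, a basic $A$ with $x\in A$ and $y\notin A$ gives $\langle A,\langle\rangle\rangle\in F_x$, which excludes $y$. You instead show that $F_x$ has no least element and invoke Proposition~\ref{prop-1}, and hence Lemmas~\ref{lem-1} and~\ref{lem-2} and the singleton strategy. The paper's route is more elementary, since no game is needed for this step. Yours shows that the separation is already supplied by the strategy. Indeed, singleton Choquet-completeness implies $T_1$: if $\beta$ keeps playing $x$, the intersection $\bigcap_n U_n$ must be $\{x\}$.

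In the maximality step, the chain $c>c'>c''>\cdots$ is superfluous. The inclusion $S(d)\subseteq\bigcap_{c\in F_x}S(c)$ follows at once from $d\leqslant c$ for all $c\in F_x$ and the monotonicity of $S$.

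Your closing remark on properness is, however, wrongly justified. Lacking a least element does not prevent $P_X$ from being downward directed; $F_x$ itself is a filter without a least element. Concretely, if $X=\{x\}$, then every condition has $S(c)=\{x\}$, so $P_X=F_x$ is itself a filter, and ``every filter is automatically proper'' is false.

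When $X$ has at least two points, properness of $F_x$ does hold, but the reason is $T_1$. For $y\neq x$, choose a basic $A\ni y$ with $x\notin A$; then $\langle A,\langle\rangle\rangle\notin F_x$. In the one-point case the statement holds only if maximality is taken among all filters. That is what both your argument and the paper's actually prove.
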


\begin{proof}
(1) By Lemma~\ref{lem-3} and the
	order-preserving property of $S$, the set $F_x$ is a filter on
	$P_X$.
	
	We now show that $\bigcap_{c\in F_x}S(c)=\{x\}$. By the
	definition of $F_x$, we have $x\in S(c)$ for every $c\in F_x$,
	and hence $x\in\bigcap_{c\in F_x}S(c)$.
	
	Now let $y\neq x$. Since $X$ is a $T_1$ space, there exists a
	basic open set $A$ such that $x\in A$ and $y\notin A$. Then
	$c_A=\langle A,\langle\rangle\rangle$ is a condition in $P_X$.
	Moreover, $x\in S(c_A)=A$, so $c_A\in F_x$. Since
	$y\notin S(c_A)$, we have
	$y\notin\bigcap_{c\in F_x}S(c)$. Therefore,
	$$
	\bigcap_{c\in F_x}S(c)=\{x\}.
	$$
	
	\medskip
	
(2)	To show that $F_x$ is maximal, let $G$ be a filter on $P_X$ such that
	$F_x\subseteq G$. We first show that $G$ has no smallest element.
	
	Suppose, to the contrary, that $g_0$ is the smallest element of $G$.
	Then $g_0\leqslant c$ for every $c\in F_x$. Since $S$ is
	order-preserving,
	$$
	S(g_0)\subseteq \bigcap_{c\in F_x}S(c)=\{x\}.
	$$
	As $S(g_0)$ is nonempty, this implies $S(g_0)=\{x\}$. Thus
	$x\in S(g_0)$. By Lemma~\ref{lem-3}, there exists $d\in P_X$ such that
	$d\prec g_0$ and $x\in S(d)$. Hence $d\in F_x\subseteq G$, which
	contradicts the fact that $g_0$ is the smallest element of $G$. Therefore
	$G$ has no smallest element.
	
	By Proposition~\ref{prop-1}, there exists $z\in X$ such that
	$$
	\bigcap_{c\in G}S(c)=\{z\}.
	$$
	Since $F_x\subseteq G$, we have
	$$
	\{z\}=\bigcap_{c\in G}S(c)
	\subseteq
	\bigcap_{c\in F_x}S(c)=\{x\}.
	$$
	Thus $z=x$. Hence $x\in S(c)$ for every $c\in G$, so
	$G\subseteq F_x$. Since $F_x\subseteq G$, we conclude that $G=F_x$.
	Therefore $F_x$ is maximal.
\end{proof}

With the above preparations, we give our main result in this paper.

\begin{theorem}\label{thm-representation}
	Let $X$ be a singleton Choquet-complete $T_1$ space. Then $X$ is
	homeomorphic to $\operatorname{MF}(P_X)$.
\end{theorem}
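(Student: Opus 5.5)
The plan is to use the map $\varphi:X\to\operatorname{MF}(P_X)$, $\varphi(x)=F_x=\{c\in P_X:x\in S(c)\}$. By Lemma~\ref{lem-4} this map is well defined. In the other direction, Proposition~\ref{prop-1} provides $\psi:\operatorname{MF}(P_X)\to X$, where $\psi(F)$ is the unique point with $\bigcap_{c\in F}S(c)=\{\psi(F)\}$. I will show that $\varphi$ and $\psi$ are mutually inverse, and then that $\varphi$ is continuous and open with respect to the basis $\{N_p:p\in P_X\}$.

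\textbf{Bijectivity.} Lemma~\ref{lem-4} gives $\bigcap_{c\in F_x}S(c)=\{x\}$, so $\psi(\varphi(x))=x$; in particular $\varphi$ is injective. For surjectivity, let $F$ be a maximal filter and put $x=\psi(F)$. Every $c\in F$ satisfies $x\in S(c)$, so $F\subseteq F_x$. Since $F_x$ is a proper filter (for instance, it is not all of $P_X$ because $X$ is $T_1$ and singleton conditions need not be below everything; in any case properness is part of Lemma~\ref{lem-4}'s maximality claim), maximality of $F$ forces $F=F_x$. Hence $\varphi\circ\psi=\mathrm{id}$.

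\textbf{Continuity and openness.} The key identity is $\varphi^{-1}(N_c)=\{x: c\in F_x\}=\{x:x\in S(c)\}=S(c)$. This set is open in $X$, so $\varphi$ is continuous. For openness, take a basic open set $A\in\mathcal B$ and $x\in A$. The condition $c_A=\langle A,\langle\rangle\rangle$ lies in $P_X$ and has $S(c_A)=A$. Therefore $\varphi(A)=\{F_y:y\in S(c_A)\}=N_{c_A}$, using surjectivity together with the identity above. An arbitrary open set is a union of basic sets, so $\varphi$ maps open sets to open sets, and hence $\varphi$ is a homeomorphism.

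\textbf{Main obstacle.} The only delicate step is surjectivity. It relies on $F\subseteq F_x$ combined with maximality of $F$, which in turn needs $F_x$ to be a genuine proper filter so that maximality applies among proper filters. This was essentially established in Lemma~\ref{lem-4} (which shows $F_x$ is maximal, hence proper). Beyond that, everything reduces to the identity $\varphi^{-1}(N_c)=S(c)$ and the fact that every basic open set is realized as $S(c)$ for some condition $c$.
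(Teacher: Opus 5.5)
Your proposal is correct and is essentially the paper's proof: you use the map $x\mapsto F_x$, take its inverse from Proposition~\ref{prop-1}, get bijectivity from Lemma~\ref{lem-4} together with maximality of $F$ and the inclusion $F\subseteq F_x$, get continuity from $\varphi^{-1}(N_c)=S(c)$, and get openness from $\varphi(A)=N_{c_A}$ with $c_A=\langle A,\langle\rangle\rangle$. The one blemish is your first parenthetical reason why $F_x$ is proper, which is not a coherent argument; the actual reason, when $X$ has a second point $y$, is that $T_1$ gives a basic $A\ni y$ with $x\notin A$, so $c_A\notin F_x$. Your fallback to Lemma~\ref{lem-4} is, however, exactly what the paper relies on.
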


\begin{proof}
	For each $x\in X$, put
	$$
	F_x=\{c\in P_X:x\in S(c)\},
	$$
	and define $\phi\colon Xo\operatorname{MF}(P_X)$ by
	$\phi(x)=F_x$. 
Next, we prove that $\phi$ is a homeomorphism in some steps.	
	
	{\bf Step 1: } $\phi$ is a bijection.
	
	By Lemma~\ref{lem-4}, $F_x$ is a maximal filter, so
	$\phi$ is well-defined.
		Conversely, for each $F\in\operatorname{MF}(P_X)$,
	Proposition~\ref{prop-1} yields a unique point $\varphi(F)\in X$
	such that
	$$
	\bigcap_{c\in F}S(c)=\{\varphi(F)\}.
	$$
	This defines a map
	$\varphi\colon\operatorname{MF}(P_X)\longrightarrow X$.
	
	We show that $\phi$ and $\varphi$ are inverse to each other. For
	$x\in X$, Lemma~\ref{lem-4} gives
	$$
	\bigcap_{c\in F_x}S(c)=\{x\},
	$$
	and hence $\varphi(\phi(x))=x$.
	Now let $F\in\operatorname{MF}(P_X)$ and put $x=\varphi(F)$. Then
	$x\in S(c)$ for every $c\in F$, so $F\subseteq F_x$. Since both
	$F$ and $F_x$ are maximal filters, it follows that $F=F_x$.
	Therefore $\phi(\varphi(F))=F$. Thus $\phi$ is a bijection whose
	inverse is $\varphi$. Therefore, $\phi$ is a bijection.
	
{\bf Step 2: }	$\phi$ is a continuous map.

 Recall
	that the sets
	$$
	N_c=\{F\in\operatorname{MF}(P_X):c\in F\},
	\qquad c\in P_X,
	$$
	form a basis for the topology on $\operatorname{MF}(P_X)$. For
	every $c\in P_X$, we have
	$$
	\phi^{-1}(N_c)
	=\{x\in X:c\in F_x\}
	=\{x\in X:x\in S(c)\}
	=S(c).
	$$
	Since $S(c)$ is open in $X$, the map $\phi$ is continuous.

{\bf Step 3: } $\phi$ is an open map. 

Let $A$ be a basic
open subset of $X$, and define
$$
c_A=\langle A,\langle\rangle\rangle.
$$
Then $c_A\in P_X$ and $S(c_A)=A$. We claim that
$\phi(A)=N_{c_A}$. Indeed, if $x\in A=S(c_A)$, then $c_A\in F_x$, and hence
$\phi(x)=F_x\in N_{c_A}$. Thus $\phi(A)\subseteq N_{c_A}$. Conversely, let $F\in N_{c_A}$ and put $x=\varphi(F)$.  Since
$c_A\in F$, we have
$$
x\in\bigcap_{c\in F}S(c)\subseteq S(c_A)=A.
$$
Note that $F=F_x=\phi(x)$, because $\phi$ and $\varphi$ are inverse
maps. Hence $F=\phi(x)\in\phi(A)$, proving that
$N_{c_A}\subseteq\phi(A)$. Therefore,
$$
\phi(A)=N_{c_A},
$$
which is open in $\operatorname{MF}(P_X)$.
This shows that $\phi$ is an open map.

Therefore, $\phi$ is a homeomorphism.
\end{proof}

Combining Theorem~\ref{thm-representation} with Remark~\ref{rem-1}, we obtain the following consequence.

\begin{corollary}
	Every singleton Choquet-complete $T_1$ space admits a domain model.
\end{corollary}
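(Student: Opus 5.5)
The plan is to combine Theorem~\ref{thm-representation} with Remark~\ref{rem-1}. Let $X$ be a singleton Choquet-complete $T_1$ space. Construct $P_X$ as above, using a fixed basis $\mathcal B$ and a fixed singleton winning strategy $s_\alpha$. Theorem~\ref{thm-representation} then gives a homeomorphism $\phi\colon X\to\operatorname{MF}(P_X)$, where $\operatorname{MF}(P_X)$ carries the topology generated by the sets $N_c$. It therefore suffices to exhibit a domain $D$ whose set of maximal points, with the relative Scott topology, is homeomorphic to $\operatorname{MF}(P_X)$.

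Take $D=(\operatorname{Filt}(P_X),\subseteq)$. I would first check the algebraicity claim in Remark~\ref{rem-1}.
\begin{enumerate}
\item Directed unions of filters are filters, so $D$ is a dcpo.
\item Each principal filter $\uparrow p$ is compact. If $\uparrow p\subseteq\bigcup\mathcal D$ for a directed family $\mathcal D$, then $p$ lies in some member of $\mathcal D$, and that member contains $\uparrow p$.
\item Every filter $F$ is the directed union of the principal filters $\uparrow p$ with $p\in F$. Directedness holds because $F$ is downward directed: if $r\leqslant p,q$ with $r\in F$, then $\uparrow p\cup\uparrow q\subseteq\uparrow r$.
\end{enumerate}
Hence $D$ is an algebraic domain. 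Moreover, the Scott opens $\uparrow\!\uparrow p=\mathcal U_p=\{F: p\in F\}$ form a basis of $\sigma(D)$.

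The main obstacle is identifying $\operatorname{Max}(D)$ with $\operatorname{MF}(P_X)$. The paper defines maximal filters as maximal among \emph{proper} filters. Since $P_X$ has no top requirement, and in particular $P_X$ itself need not be a filter, I will argue as follows.

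First, $P_X$ is not downward directed in general. Two conditions whose basic sets $S(c)$ are disjoint have no common lower bound, because $S$ is order-preserving and the sets $S(c)$ are nonempty. So every filter is proper, and $\operatorname{MF}(P_X)=\operatorname{Max}(D)$.

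If $X$ is a singleton, this argument fails. In that case $P_X$ could itself be a filter, and that degenerate case must be handled directly. A one-point space is trivially homeomorphic to $\operatorname{Max}$ of a one-point domain, so it causes no difficulty.

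In the general case, where $X$ is $T_1$ with at least two points, disjoint basic sets exist. Therefore $P_X\notin D$, and the maximal elements of $D$ are exactly the maximal filters.

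Finally, the relative Scott topology on $\operatorname{Max}(D)$ has basis $\mathcal U_c\cap\operatorname{Max}(D)=N_c$. So $\operatorname{MF}(P_X)$ is the subspace $\operatorname{Max}(D)$ of $\Sigma D$. Composing with $\phi$ shows that $X$ is homeomorphic to $\operatorname{Max}(D)$, that is, $X$ admits a domain model.
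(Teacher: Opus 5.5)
You follow the paper's route: it proves the corollary simply by combining Theorem~\ref{thm-representation} with Remark~\ref{rem-1}. Your check that $(\operatorname{Filt}(P_X),\subseteq)$ is an algebraic domain, with Scott basis $\{F: p\in F\}$, is fine. You are also right that Remark~\ref{rem-1} tacitly identifies $\operatorname{MF}(P)$, the maximal \emph{proper} filters, with $\operatorname{Max}(\operatorname{Filt}(P))$. That identification fails when $P$ is itself a filter, and the paper passes over this point.

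However, your way of excluding that case contains a false step. A $T_1$ space with at least two points need not contain two disjoint nonempty open sets. The cofinite topology on an infinite set is $T_1$, yet any two nonempty open sets meet. For such a space, Lemma~\ref{lem-3} makes $P_X$ downward directed, so $P_X$ is a filter and $\operatorname{Max}(\operatorname{Filt}(P_X))=\{P_X\}$. So the $T_1$ axiom alone does not give $\operatorname{MF}(P_X)=\operatorname{Max}(\operatorname{Filt}(P_X))$; singleton Choquet-completeness has to be used.

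The repair uses the paper's lemmas. Suppose $|X|\geq 2$ and $P_X$ were a filter.
\begin{enumerate}
\item Lemma~\ref{lem-3}, applied with $c_1=c_2$, shows $P_X$ has no least element.
\item Proposition~\ref{prop-1} then gives $\bigcap_{c\in P_X}S(c)=\{z\}$ for some $z$.
\item Pick $y\neq z$ and a basic $A$ with $y\in A$ and $z\notin A$. Then $\langle A,\langle\rangle\rangle\in P_X$ and its basic set misses $z$, a contradiction.
\end{enumerate}
So $P_X$ is not a filter, and the rest of your argument goes through.

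You can also drop the case split altogether. The proof of Lemma~\ref{lem-4} shows that $F_x$ is maximal among \emph{all} filters, since the filter $G\supseteq F_x$ there need not be proper. Any $F\in\operatorname{Max}(\operatorname{Filt}(P_X))$ has no least element by Lemma~\ref{lem-3}. Hence $F\subseteq F_x$ for some $x$ by Proposition~\ref{prop-1}, and so $F=F_x$. Thus $x\mapsto F_x$ is a bijection from $X$ onto $\operatorname{Max}(\operatorname{Filt}(P_X))$ for every nonempty $X$, including the one-point space, where $F_x=P_X$. Steps 2--3 of the proof of Theorem~\ref{thm-representation} then make it a homeomorphism for the relative Scott topology.
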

 Moreover, by the standard fact that maximal point spaces
of domains are sober, we obtain the following consequence.

\begin{theorem}
	Every singleton Choquet-complete $T_1$ space is sober.
\end{theorem}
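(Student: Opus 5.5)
The plan is to transport sobriety along the homeomorphism of Theorem~\ref{thm-representation}. It then suffices to show that $\operatorname{MF}(P_X)$ is sober. By Remark~\ref{rem-1}, this space is exactly $\operatorname{Max}(D)$ with the relative Scott topology, where $D=(\operatorname{Filt}(P_X),\subseteq)$ is an algebraic dcpo. So the statement reduces to a general fact: if $D$ is a domain and $\operatorname{Max}(D)$ is $T_1$, then $\operatorname{Max}(D)$ with the relative Scott topology is sober.

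\textbf{Setup.} Let $C\subseteq \operatorname{Max}(D)$ be an irreducible closed set. We must find $m$ with $C=\operatorname{cl}\{m\}$. Since $\operatorname{Max}(D)\cong X$ is $T_1$, this means $C=\{m\}$.

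\textbf{Building a candidate point.} Put
\[
I=\{a\in K(D): C\cap \ua a\neq\emptyset\}.
\]
Each $\ua a\cap\operatorname{Max}(D)$ with $a$ compact is relatively open. By irreducibility, any two such sets meeting $C$ have a common point of $C$. Take compact $a,b\in I$ and a point $F\in C\cap\ua a\cap\ua b$. Since $D$ is algebraic, the compact elements below $F$ are directed, so there is a compact $e$ with $a,b\leqslant e\leqslant F$, and hence $e\in I$. So $I$ is directed; concretely, $I$ is a set of principal filters closed under common refinement. Let $G=\bigvee I$ in $D$ and extend $G$ by Zorn's lemma to a maximal element $m$. (In $\operatorname{Filt}(P_X)$ one can also check directly that $G$ is a proper filter; note $I\neq\emptyset$ since $C\neq\emptyset$.)

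\textbf{Showing $m\in C$.} Suppose $m\notin C$. Then some basic open set $\ua a\cap\operatorname{Max}(D)$ with $a\leqslant m$ compact misses $C$. The goal is to derive a contradiction by showing that $a$ is already forced into $I$.

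This is the main obstacle, because $a\leqslant m$ does not obviously give $a\leqslant G$. I would first try to avoid Zorn and instead argue directly in $X$ through the correspondence $F\mapsto\bigcap S(c)$. The sets $S(c)$ for $c\in I$ form a family of open sets of $X$. Each of them meets the closed irreducible set $C'=\phi^{-1}(C)$, and any finitely many of them meet $C'$ simultaneously.

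Following the proof of Lemma~\ref{lem-2}, I would build a strictly decreasing chain $c_0>c_1>\cdots$ inside $I$, using Lemma~\ref{lem-3} together with irreducibility to keep each step inside $I$. By Lemma~\ref{lem-1}, the chain determines a point $x$ with $\bigcap S(c_i)=\{x\}$.

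It remains to show $x\in C'$. If not, some basic open $A\ni x$ would miss $C'$. By choosing the chain cofinally, interleaving refinements so that every basic neighbourhood relevant to $x$ is eventually handled, some $S(c_n)$ should be contained in $A$. That would contradict $S(c_n)\cap C'\neq\emptyset$.

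\textbf{Conclusion.} With $x\in C'$ established, every open set meeting $C'$ contains $x$, so $C'\subseteq\operatorname{cl}\{x\}=\{x\}$ and $C'=\{x\}$. This gives sobriety of $X$.
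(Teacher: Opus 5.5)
Your reduction is the same as the paper's: transport along $\phi$ and regard $\operatorname{MF}(P_X)$ as $\operatorname{Max}(\operatorname{Filt}(P_X))$ with the relative Scott topology. The paper then just cites the standard fact that maximal-point spaces of domains are sober. You try to prove that fact yourself, and there is a genuine gap exactly where you flag the ``main obstacle''.

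In your first route, you pass by Zorn's lemma to an arbitrary maximal $m\geqslant G$. Nothing then ties $m$ back to $C$, and $m\in C$ is left unproved.

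Your second route does not repair this. The ``cofinal interleaving'' cannot be carried out, for three reasons:
\begin{itemize}
\item The point $x$ is only determined after the whole chain is built, so you cannot schedule its neighbourhoods in advance, and there may be uncountably many of them.
\item $\bigcap_n S(c_n)=\{x\}$ does not make $(S(c_n))$ a neighbourhood base at $x$. That is precisely what separates singleton from convergence Choquet-completeness; the space of Example~\ref{ex:join-topology} is not even first-countable.
\item Every member of $I$ meets $C'$, so you can never steer the chain into a set $A$ disjoint from $C'$ while staying in $I$.
\end{itemize}
Finally, a single chain only gives $x\in\bigcap_n S(c_n)$. Your concluding paragraph needs more: $x\in S(c)$ for every $c$ with $S(c)\cap C'\neq\emptyset$.

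The missing idea is to use the maximality of the points already in $C$, instead of pushing a constructed point into $C$. In your first route no Zorn is needed. For $F\in C$, every compact $a\leqslant F$ lies in $I$, since $F\in C\cap\ua a$. Hence $F=\bigvee\{a\in K(D):a\leqslant F\}\leqslant G$, and maximality of $F$ gives $F=G$. So $C=\{G\}$.

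Equivalently, working in $X$: $J=\{c\in P_X:S(c)\cap C'\neq\emptyset\}$ is a filter, by irreducibility and Lemma~\ref{lem-3}. For each $y\in C'$ we have $F_y\subseteq J$, so $F_y=J$ by Lemma~\ref{lem-4}. Injectivity of $\phi$ then makes $C'$ a singleton.

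Alternatively, $J$ has no least element by Lemma~\ref{lem-3}, so Proposition~\ref{prop-1} applied to $J$ gives a point $x$ lying in every $S(c)$ with $c\in J$. Then each $y\in C'$ lies in $\operatorname{cl}\{x\}=\{x\}$, and you never need to prove $x\in C'$ separately.

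The standard fact the paper cites has the same short proof. For a domain $D$, $\Sigma D$ is sober, so $\operatorname{cl}_D(C)=\da d$ for some $d$. Then $C=\da d\cap\operatorname{Max}(D)$ is nonempty and consists of maximal elements below $d$, which forces $C=\{d\}$.
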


\begin{remark}
	By Theorem~\ref{thm-representation}, the space
	$(\mathbb R,\tau_{\mathrm{usual}}\vee\tau_{\mathrm{cocountable}})$ in
	Example~\ref{ex:join-topology} has a domain model, although it is not
	convergence Choquet-complete. Thus Example~\ref{ex:join-topology}
	shows that the first implication below is not reversible:
	\[
	\text{convergence Choquet-complete}
	\Longrightarrow
	\text{singleton Choquet-complete}
	\Longrightarrow
	\text{domain-representable}.
	\]
	
	Consequently, in the $T_1$ case, the sobriety assumption in the
	question of de Brecht, Goubault-Larrecq, Jia and Lyu
	\cite[p.~33, Question~(iii)]{LCS} is unnecessary: every convergence
	Choquet-complete $T_1$ space admits a domain representation. Hence, in
	the $T_1$ setting, their question reduces to whether every convergence
	Choquet-complete $T_1$ space is domain-complete. This question remains
	open.
\end{remark}

We conclude this paper with a concrete example in the last section.
\begin{example}
	Let $\mathbb{R}$ be the real line with the usual topology. Then
	$\mathbb R$ has an $\omega$-ideal model $P$; see Example~4.3 of
	\cite{ideal}. Thus $\operatorname{Max}(P)$ is homeomorphic to
	$\mathbb R$.
	
	Let
	$$
	P^*=P\cup\{\omega\},
	$$
	where $\omega\notin P$. We keep the original order on $P$ and add the
	new order relations
	$$
	x\leqslant\omega \quad\text{whenever}\quad
	x\leqslant r
	\text{ for some } r\in\mathbb R\setminus\mathbb Q.
	$$
	Then one checks that $P^*$ is an $\omega$-algebraic dcpo such that
	$$
	\operatorname{Max}(P^*)=\mathbb Q\cup\{\omega\}
	\quad\text{and}\quad
	K(P^*)=K(P)\cup\{\omega\}.
	$$
	Moreover, $\mathbb Q$ is an open-and-closed subspace of
	$\operatorname{Max}(P^*)$.
	
Since $\mathbb Q$ is not Choquet-complete, and every domain-representable
space is Choquet-complete, $\mathbb Q$ is not domain-representable.
Therefore a domain-representable space may have an open-and-closed subspace
which is not domain-representable. This provides an alternative negative
answer to Martin's question \cite{ideal} on whether domain-representability
is hereditary to closed subspaces; see also the answer given by Bennett and
Lutzer in \cite{BL-domain}.
	
	We now show that $\operatorname{Max}(P^*)=\mathbb Q\cup\{\omega\}$ is
	not convergence Choquet-complete, and hence is not domain-complete.
	Let $\mathbb Q=\{r_n:n\in\mathbb N\}$. Suppose player $\alpha$ is given
	an arbitrary strategy. Player $\beta$ starts by playing $r_0$ and a
	basic open neighbourhood of $r_0$ of the form
	$\ua k_0\cap\operatorname{Max}(P^*)$, where $k_0\in K(P^*)$ and
	$r_0\in\ua k_0$. After $\alpha$ responds, player $\beta$ chooses
	$$
	r_{n_1},\quad
	n_1=\min\{n\in\mathbb N:
	r_n \text{ belongs to the open set just played by } \alpha\},
	$$
	and then plays a basic open neighbourhood
	$\ua k_1\cap\operatorname{Max}(P^*)$ of $r_{n_1}$ contained in
	$\alpha$'s previous move. Continuing in this way, we obtain a play
	$$
	r_{n_0},V_0,\ua k_0\cap\operatorname{Max}(P^*),
	r_{n_1},V_1,\ua k_1\cap\operatorname{Max}(P^*),\ldots
	$$
	following the strategy of $\alpha$, where the rational points
	$r_{n_i}$ are chosen successively so that
	$$
	\mathbb Q\cap\bigcap_{i\in\mathbb N}\ua k_i=\emptyset.
	$$
	On the other hand, each set $\ua k_i\cap\mathbb R$ is a nonempty open
	subset of $\mathbb R$, and hence contains an irrational number. By the
	definition of the order on $P^*$, this implies that
	$\omega\in\ua k_i$ for every $i\in\mathbb N$. Therefore
	$$
	\bigcap_{i\in\mathbb N}
	(\ua k_i\cap\operatorname{Max}(P^*))=\{\omega\}.
	$$
	However, $\{\omega\}$ is open in $\operatorname{Max}(P^*)$, while none
	of the open sets $\ua k_i\cap\operatorname{Max}(P^*)$ is contained in
	$\{\omega\}$. Hence these open sets cannot form a neighbourhood basis
	at $\omega$. Thus $\operatorname{Max}(P^*)$ is not convergence
	Choquet-complete.
	
\end{example}

\end{document}